\documentclass[11pt]{article}
\usepackage{mathrsfs}
\usepackage{bbm}
\usepackage{amsfonts}
\usepackage{amsmath,amssymb}
\usepackage{amsmath}
\usepackage{amssymb}
\usepackage{graphicx}

\setcounter{MaxMatrixCols}{10}

\setlength{\hoffset}{-0.4mm} \setlength{\voffset}{-0.4mm}
\setlength{\textwidth}{160mm} \setlength{\textheight}{235mm}
\setlength{\topmargin}{0mm} \setlength{\oddsidemargin}{0mm}
\setlength{\evensidemargin}{0mm} \setlength \arraycolsep{1pt}
\setlength{\headsep}{0mm} \setlength{\headheight}{0mm}
\newtheorem{theorem}{Theorem}[section]
\newtheorem{corollary}[theorem]{Corollary}

\newtheorem{proposition}[theorem]{Proposition}

\newenvironment{proof}[1][Proof]{\noindent \textbf{#1.} }{\  $\Box$}
\numberwithin{equation}{section}

\begin{document}

\title{\textbf{A Note On  $G$-normal Distributions}}
\author{ Yongsheng Song\thanks{Academy of Mathematics
and Systems Science, CAS, Beijing, China, yssong@amss.ac.cn.
Research supported by by NCMIS; Youth Grant of NSF (No. 11101406);
Key Project of NSF (No. 11231005); Key Lab of Random Complex
Structures and Data Science, CAS (No. 2008DP173182).} }
\maketitle
\date{}

\begin{abstract} As is known, the convolution $\mu*\nu$ of two $G$-normal distributions $\mu, \nu$ with different intervals of variances  may not be $G$-normal. We shows that $\mu*\nu$ is  a $G$-normal distribution if and only if $\frac{\overline{\sigma}_\mu}{\underline{\sigma}_\mu}=\frac{\overline{\sigma}_\nu}{\underline{\sigma}_\nu}$.

\end{abstract}

\textbf{Key words}: $G$-normal distribution, Cramer's decomposition

\textbf{MSC-classification}: 60E10


\section{Introduction}
Peng (2007) introduced the notion of $G$-normal distribution via the viscosity solutions of the $G$-heat equation below
\begin {eqnarray*}
\partial_t u-G(\partial^2_x u)&=&0, \ (t,x)\in (0,\infty)\times \mathbb{R},\\
                        u(0,x)&=& \varphi (x),
\end {eqnarray*} where $G(a)=\frac{1}{2}(\overline{\sigma}^2 x^+-\underline{\sigma}^2 x^-)$, $a\in \mathbb{R}$ with $0\leq\underline{\sigma}\leq\overline{\sigma}<\infty$, and $\varphi\in C_{b,Lip}(\mathbb{R})$, the collection of bounded Lipstchiz functions on $\mathbb{R}$.

 Then the one-dimensional $G$-normal  distribution is defined by \[N_G[\varphi]=u^\varphi(1,0),\] where $u^\varphi$ is the  viscosity solution to the $G$-heat equation with the initial value $\varphi$. We denote by $\mathcal{G}^0$ the collection of functions $G$ defined above. For $G_1, G_2\in \mathcal{G}^0$,  the convolution between $N_{G_1}, N_{G_2}$ is defined as $N_{G_1}*N_{G_2}[\varphi]:=N_{G_1}[\phi]$ with $\phi(x):=N_{G_2}[\varphi(x+\cdot)]$.

 As is well-known, the convolution of two normal distributions is also   normal. How about $G$-normal distributions?
 We state the question in the PDE language: For $G_1, G_2\in \mathcal{G}^0$, set $G(t,a)=G_2(a), \ t\in(0,\frac{1}{2}]$ and $G(t,a)=G_1(a), \ t\in(\frac{1}{2},1]$. Let $v^\varphi$ be the viscosity solution to the following PDE
 \begin {eqnarray*}
\partial_t v-G(t,\partial^2_x v)&=&0, \\
                        v(0,x)&=& \varphi (x).
\end {eqnarray*} Can we find a function $G\in \mathcal{G}^0$ such that \[v^\varphi(1,0)=u^\varphi(1,0), \ \textmd{for all} \ \varphi\in C_{b,Lip}(\mathbb{R}) ?\] Here $u^\varphi$ is the  viscosity solution to the $G$-heat equation. However, \cite {H10} gave a counterexample to show that generally $N_{G_1}*N_{G_2}$ is not $G$-normal any more.

We denote by $\mathcal{G}$ the subset of $\mathcal{G}^0$ consisting of the elements with $\underline{\sigma}>0$. $N_G$ is called non-degenerate if $G$ belongs to $\mathcal{G}$.   For $G\in \mathcal{G}$, set \[\beta_G:=\frac{\overline{\sigma}}{\underline{\sigma}} \ \textmd{ and} \ \sigma_G=\frac{\overline{\sigma}+\underline{\sigma}}{2}. \] For abbreviation, we write $\beta, \sigma$ instead of $\beta_G, \sigma_G$ when no  confusion can arise.

We shall prove that $N_{G_1}*N_{G_2}$ is  a $G$-normal distribution if and only if $\beta_{G_1}=\beta_{G_2}$. Besides, the convolution between $G$-normal distributions is not commutative. We also prove that $N_{G_1}*N_{G_2}=N_{G_2}*N_{G_1}$ if and only if $\beta_{G_1}=\beta_{G_2}$.

Therefore, in order to emphasize the importance of the ratio $\beta$, we denote the $G$-normal distribution $N_G$ by $N_{\beta}(0, \sigma^2)$.

\section{Characteristic Functions}
First we shall consider the solutions of special forms to the $G$-heat equation
\begin {eqnarray}\label {G}
\partial_t u-G(\partial^2_x u)=0.
\end {eqnarray} Assume that $u(t,x)=a(t)\phi(x)$ with $a(t)\geq 0$ is a solution to the $G$-heat equation (\ref {G}). Then we conclude that
\[\frac{a'(t)}{a(t)}=\frac{G(\phi''(x))}{\phi(x)}\] is a constant. Assuming  that they are equal to $-\frac{\rho^2}{2}$, we have $a(t)=e^{-\frac{\rho^2}{2}t}$ and

\begin {eqnarray}\label {C}
G(\phi''(x))=-\frac{\rho^2}{2}\phi(x).
\end {eqnarray}
If $\phi''(x)$ is positive, the equation ({\ref {C}}) reduces to
\begin {eqnarray}\label {PC}
\overline{\phi}''(x)=-\frac{\rho^2}{\overline{\sigma}^2}\overline{\phi}(x).
\end {eqnarray} and the solution is \[\overline{\phi}(x)=\overline{\lambda} \cos(\frac{\rho}{\overline{\sigma}}x+\overline{\theta}).\]
On the other hand, if $\phi''(x)$ is negative, the equation ({\ref {C}}) reduces to
\begin {eqnarray}\label {NC}
\underline{\phi}''(x)=-\frac{\rho^2}{\underline{\sigma}^2}\underline{\phi}(x).
\end {eqnarray} and the solution is \[\underline{\phi}(x)=\underline{\lambda} \cos(\frac{\rho}{\underline{\sigma}}x+\underline{\theta}).\]
Motivated by the arguments above, we shall construct the solutions to the equation (\ref {C}) in the following way. Denoting by $\beta=\frac{\overline{\sigma}}{\underline{\sigma}}$, set
\begin {equation}\phi_\beta(x)=
\begin {cases}\frac{2}{1+\beta}\cos (\frac{1+\beta}{2}x) & \textmd{for $x\in[-\frac{\pi}{1+\beta},\frac{\pi}{1+\beta})$;}\\
\frac{2\beta}{1+\beta}\cos (\frac{1+\beta}{2\beta}x+\frac{\beta-1}{2\beta}\pi) & \textmd{for $x\in[\frac{\pi}{1+\beta},\frac{(2\beta+1)\pi}{1+\beta})$.}
\end {cases}
\end {equation}  This is a variant of the trigonometric function $\cos x$ (see Figure 1).

\begin {figure} [!ht]
\centering
\includegraphics[scale=1]{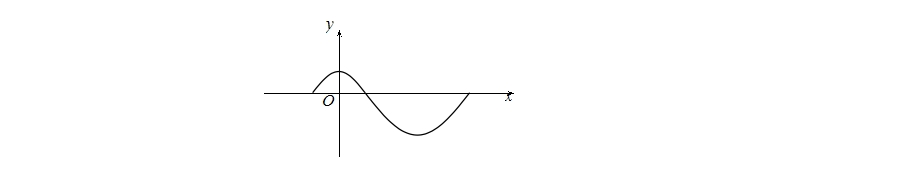}
\caption {$\phi_\beta(x)$}
\end {figure}

Then extend the definition of $\phi_\beta$ to the whole real line by the property $\phi(x+2k\pi)=\phi(x)$, $k\in \mathbb{Z}$. Clearly,  $\phi_1(x)=\cos x$ and $\phi_\beta$ belongs to $C^{2,1}(\mathbb{R})$, the space of bounded functions on $\mathbb{R}$ with uniformly Lipschitz continuous second-order derivatives.

\begin {proposition} $\phi_\beta$ is a solution to equation (\ref {C}) with $\rho=\frac{\underline{\sigma}+\overline{\sigma}}{2}=:\sigma$.
\end {proposition}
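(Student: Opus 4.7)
The plan is to verify equation (\ref{C}) directly on each of the two pieces of $\phi_\beta$ and then to check compatibility at the seams. On each piece, $\phi_\beta$ is a scaled and shifted cosine, so $\phi_\beta''$ is proportional to $\phi_\beta$; the key is that the two pieces are engineered precisely so that the \emph{eigenvalue} produced through the piecewise nonlinearity $G$ comes out the same, namely $\sigma^2/2$.

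First, on the piece $[-\frac{\pi}{1+\beta},\frac{\pi}{1+\beta})$, I will differentiate $\phi_\beta(x)=\frac{2}{1+\beta}\cos(\frac{1+\beta}{2}x)$ twice to get $\phi_\beta''(x)=-\frac{(1+\beta)^2}{4}\,\phi_\beta(x)$. On this interval the argument $\frac{1+\beta}{2}x$ lies in $[-\pi/2,\pi/2]$, so $\phi_\beta\ge 0$ and hence $\phi_\beta''\le 0$. Since $G(a)=\frac12\underline{\sigma}^2 a$ for $a\le 0$, this gives $G(\phi_\beta'')=-\frac{\underline{\sigma}^2(1+\beta)^2}{8}\phi_\beta$, and substituting $\sigma=\frac{\underline{\sigma}+\overline{\sigma}}{2}=\frac{(1+\beta)\underline{\sigma}}{2}$ identifies this with $-\frac{\sigma^2}{2}\phi_\beta$, confirming (\ref{C}).

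Next, on $[\frac{\pi}{1+\beta},\frac{(2\beta+1)\pi}{1+\beta})$ the analogous computation on $\phi_\beta(x)=\frac{2\beta}{1+\beta}\cos(\frac{1+\beta}{2\beta}x+\frac{\beta-1}{2\beta}\pi)$ yields $\phi_\beta''=-\frac{(1+\beta)^2}{4\beta^2}\phi_\beta$. A quick check of endpoints shows the argument now ranges in $[\pi/2,3\pi/2]$, so $\phi_\beta\le 0$ and $\phi_\beta''\ge 0$. Applying $G(a)=\frac12\overline{\sigma}^2 a$ for $a\ge 0$ and using $\overline{\sigma}=\beta\underline{\sigma}$ again collapses the coefficient to $-\sigma^2/2$, so (\ref{C}) holds on this piece as well.

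Finally, I will argue that $\phi_\beta$ is $C^{2,1}$ globally so that (\ref{C}) holds everywhere in the classical sense. At each join point, e.g.\ $x=\pi/(1+\beta)$, the cosine arguments on both sides equal $\pi/2$, forcing $\phi_\beta$ and $\phi_\beta''$ to vanish simultaneously; a one-line derivative check shows $\phi_\beta'$ equals $-1$ from both sides, producing a $C^1$ match, while $\phi_\beta''=0$ from both sides gives continuity of the second derivative (with Lipschitz continuity following piecewise). At such points both sides of (\ref{C}) are zero, so the equation is satisfied there trivially, and $2\pi$-periodic extension carries everything to $\mathbb{R}$. The only real subtlety is bookkeeping the sign regimes of $G$ and matching the two eigenvalues $\rho^2/\underline{\sigma}^2=(1+\beta)^2/4$ and $\rho^2/\overline{\sigma}^2=(1+\beta)^2/(4\beta^2)$ against a single $\rho=\sigma$; this is exactly where the choice $\sigma=(\underline{\sigma}+\overline{\sigma})/2$ is dictated.
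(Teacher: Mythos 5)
Your proof is correct and follows essentially the same route as the paper's: a direct piecewise computation showing $\phi_\beta''=-\frac{(1+\beta)^2}{4}\phi_\beta\le 0$ on the first piece and $\phi_\beta''=-\frac{(1+\beta)^2}{4\beta^2}\phi_\beta\ge 0$ on the second, then applying the appropriate branch of $G$ and using $\overline{\sigma}=\beta\underline{\sigma}$ to collapse both coefficients to $\sigma^2/2$. The only difference is that you additionally verify the sign regimes from the range of the cosine arguments and the $C^1$/$C^2$ matching at the seams, which the paper asserts without proof; this is a welcome bit of extra rigor, not a different argument.
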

\begin {proof} The proof follows from simple calculations. For $x\in[-\frac{\pi}{1+\beta},\frac{\pi}{1+\beta})$,
\[\phi''_\beta(x)=-\frac{1+\beta}{2}\cos (\frac{1+\beta}{2}x)=-\frac{(1+\beta)^2}{4}\phi_\beta(x)\leq 0.\] So
\[G(\phi''_\beta(x))=-\frac{(1+\beta)^2\underline{\sigma}^2}{8}\phi_\beta(x)=-\frac{\sigma^2}{2}\phi_\beta(x).\] For $x\in[\frac{\pi}{1+\beta},\frac{(2\beta+1)\pi}{1+\beta})$,
\[\phi''_\beta(x)=-\frac{1+\beta}{2\beta}\cos (\frac{1+\beta}{2\beta}x+\frac{\beta-1}{2\beta}\pi)=-\frac{(1+\beta)^2}{4\beta^2}\phi_\beta(x)\geq 0.\] So
\[G(\phi''_\beta(x))=-\frac{(1+\beta)^2\overline{\sigma}^2}{8\beta^2}\phi_\beta(x)=-\frac{\sigma^2}{2}\phi_\beta(x).\]

\end {proof}
\begin {corollary} Let $\sigma:=\frac{\underline{\sigma}+\overline{\sigma}}{2}$ and $\beta:=\frac{\overline{\sigma}}{\underline{\sigma}}$. $e^{-\frac{{\sigma}^2}{2}t}\phi_\beta(x)$ is the solution to equation (\ref {G}) with the initial value $\phi_\beta(x)$.
In other words, $N_G[\phi_\beta(x+\sqrt{t}\cdot)]=e^{-\frac{{\sigma}^2}{2}t}\phi_\beta(x)$.
\end {corollary}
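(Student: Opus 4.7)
The plan is to read off the corollary from the preceding proposition via direct verification plus the scaling invariance of the $G$-heat equation.

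First, I would observe that the previous proposition gives the pointwise identity $G(\phi''_\beta(x))=-\tfrac{\sigma^{2}}{2}\phi_\beta(x)$, and that $\phi_\beta\in C^{2,1}(\mathbb R)$ so that any candidate built from $\phi_\beta$ will be a classical (hence viscosity) solution as soon as the PDE is checked pointwise. Then, fixing $x\in\mathbb R$ and $t>0$, I would take the ansatz
\begin{equation*}
v(s,y):=e^{-\tfrac{\sigma^{2}}{2}ts}\,\phi_\beta(x+\sqrt{t}\,y),\qquad (s,y)\in[0,\infty)\times\mathbb R,
\end{equation*}
and verify three things: (i) $v(0,y)=\phi_\beta(x+\sqrt{t}\,y)$; (ii) $\partial_s v=-\tfrac{\sigma^{2}t}{2}v$; and (iii) $\partial_y^2 v=t\,e^{-\tfrac{\sigma^{2}}{2}ts}\phi''_\beta(x+\sqrt{t}\,y)$, whence by the positive $1$-homogeneity of $G$ together with the proposition,
\begin{equation*}
G(\partial_y^2 v)=t\,e^{-\tfrac{\sigma^{2}}{2}ts}\,G(\phi''_\beta(x+\sqrt{t}\,y))=-\tfrac{\sigma^{2}t}{2}\,v.
\end{equation*}
This gives $\partial_s v-G(\partial_y^2 v)=0$. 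So $v$ is the (unique) viscosity solution of the $G$-heat equation with initial datum $y\mapsto\phi_\beta(x+\sqrt{t}\,y)$.

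By the very definition $N_G[\varphi]=u^{\varphi}(1,0)$, evaluating at $(s,y)=(1,0)$ yields
\begin{equation*}
N_G\bigl[\phi_\beta(x+\sqrt{t}\,\cdot)\bigr]=v(1,0)=e^{-\tfrac{\sigma^{2}}{2}t}\phi_\beta(x),
\end{equation*}
which is the second assertion. Setting $t=1$, or equivalently applying the same verification directly to $u(t,x)=e^{-\sigma^{2}t/2}\phi_\beta(x)$ on $[0,\infty)\times\mathbb R$, also yields the first assertion that $u$ solves equation (G) with initial value $\phi_\beta$.

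There is essentially no obstacle here: the only substantive inputs are positive homogeneity of $G$ (used to pull the positive factor $t\,e^{-\sigma^{2}ts/2}$ outside) and the preceding proposition. The mildly delicate point is ensuring that the classical solution $v$ may legitimately be identified with the viscosity solution on which $N_G$ is defined; this is guaranteed by $\phi_\beta\in C^{2,1}(\mathbb R)$, which makes $v$ a classical $C^{1,2}$ solution of a uniformly parabolic equation and hence, by the standard consistency between classical and viscosity solutions of the $G$-heat equation, the unique viscosity solution with the prescribed initial value.
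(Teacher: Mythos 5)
Your proof is correct and is essentially the argument the paper intends: the corollary is left as an immediate consequence of the proposition, obtained exactly as you do by differentiating the ansatz, pulling the positive factor out of $G$ by positive homogeneity, and invoking the consistency of classical and viscosity solutions. The scaling step with $v(s,y)=e^{-\sigma^{2}ts/2}\phi_\beta(x+\sqrt{t}\,y)$ is the same device the paper uses immediately afterwards for the family $\phi_\beta^{\lambda,c,\theta}$, so nothing in your route differs from the source.
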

For $\lambda>0$ and  $c, \theta \in R$, set $\phi_{\beta}^{ \lambda, c, \theta}(x):=\lambda \phi_{\beta}(cx+\theta)$. It's easy to check that \[G((\phi_{\beta}^{ \lambda, c, \theta})'')=-\frac{c^2{\sigma}^2}{2}\phi_{\beta}^{ \lambda, c, \theta}.\] So $e^{-\frac{c^2{\sigma}^2}{2}t}\phi_{\beta}^{ \lambda, c, \theta}(x)$ is the solution to equation (\ref {G}) with the initial  value
$\phi_{\beta}^{ \lambda, c, \theta}(x)$. For any $\beta>1$, we call \[(\phi_{\beta}^{ \lambda, c, \theta}(x))_{\lambda,c,\theta}\] the characteristic functions of $G$-normal distributions $N_\beta(0, \sigma^2)$.

\section{Cramer's Type Decomposition  for $G$-normal Distributions}
Let's introduce more properties on the characteristic functions $\{\phi_\beta(x)\}_{\beta\geq1}$. For $1\leq\alpha <\beta<\infty$, we have \[\phi_\alpha(x)-\phi_\beta(x)\geq\frac{2(\beta-\alpha)}{(1+\alpha)(1+\beta)}=:e(\alpha,\beta)>0.\]

\begin {figure} [!ht] \label {f2}
\centering
\includegraphics[scale=1]{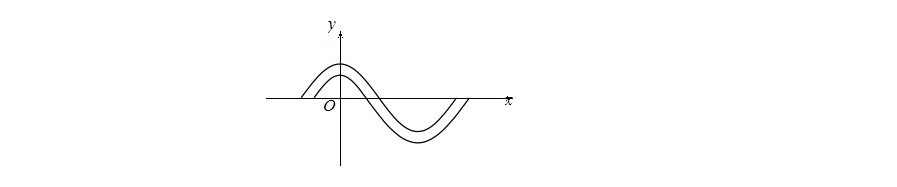}
\caption {$\phi_\alpha(x)$ and $\phi_\beta(x)$}
\end {figure}

\begin {theorem} For $G_1, G_2\in \mathcal{G}$, the convolution $N_{G_1}*N_{G_2}$ is a $G$-normal distribution if and only if $\beta_{G_1}:=\frac{\overline{\sigma}_{G_1}}{\underline{\sigma}_{G_1}}=\frac{\overline{\sigma}_{G_2}}{\underline{\sigma}_{G_2}}=:\beta_{G_2}$.
\end {theorem}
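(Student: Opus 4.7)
Assume $\beta_{G_1}=\beta_{G_2}=:\beta$ and factor $G_i=\underline{\sigma}_i^2\,G_*$, where $G_*\in\mathcal{G}$ has normalized ratios $(\underline{\sigma}_*,\overline{\sigma}_*)=(1,\beta)$. Writing $P_t^H$ for the nonlinear semigroup of the $H$-heat equation $\partial_t u=H(\partial_x^2 u)$, the $1$-homogeneity of $H$ gives $P_t^{G_i}=P_{\underline{\sigma}_i^2 t}^{G_*}$ via the time rescaling $s=\underline{\sigma}_i^2 t$. The semigroup property of $P^{G_*}$ then yields
\[P_1^{G_1}\circ P_1^{G_2}=P_{\underline{\sigma}_1^2}^{G_*}\circ P_{\underline{\sigma}_2^2}^{G_*}=P_{\underline{\sigma}_1^2+\underline{\sigma}_2^2}^{G_*}=P_1^{G},\]
with $G:=(\underline{\sigma}_1^2+\underline{\sigma}_2^2)G_*\in\mathcal{G}$ of ratio $\beta_G=\beta$. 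Hence $N_{G_1}*N_{G_2}=N_G$ is $G$-normal.

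\textbf{Necessity.} Suppose $N_{G_1}*N_{G_2}=N_G$. By iterated expectation against $\pm x^2$, $\overline{\sigma}_G^2=\overline{\sigma}_1^2+\overline{\sigma}_2^2$ and $\underline{\sigma}_G^2=\underline{\sigma}_1^2+\underline{\sigma}_2^2$; equivalently $G=G_1+G_2$ pointwise on $\mathbb{R}$. Testing the operator identity $P_1^{G_1}\circ P_1^{G_2}=P_1^G$ on a characteristic function $\phi_{\beta_{G_2}}^{\lambda,c,\theta}$ of $N_{G_2}$: since $P_1^{G_2}\phi_{\beta_{G_2}}^{\lambda,c,\theta}=e^{-c^2\sigma_{G_2}^2/2}\phi_{\beta_{G_2}}^{\lambda,c,\theta}$ (Corollary~2.2), and since $(P_1^H\phi_{\beta_{G_2}}^{\lambda,c,\theta})(x)=\lambda(P_{c^2}^H\phi_{\beta_{G_2}})(cx+\theta)$ for any $H\in\mathcal{G}$ by $1$-homogeneity, letting $\theta$ vary freely and setting $t=c^2$, $y=cx+\theta$, I obtain
\[(P_t^G\phi_{\beta_{G_2}})(y)=e^{-t\sigma_{G_2}^2/2}\,(P_t^{G_1}\phi_{\beta_{G_2}})(y)\qquad\forall\,t>0,\ y\in\mathbb{R}.\]
Differentiating in $t$, applying the $G$- and $G_1$-heat equations, pushing the factor $e^{t\sigma_{G_2}^2/2}$ through $G_1$ by $1$-homogeneity, and invoking $G-G_1=G_2$, I conclude
\[G_2\bigl(\partial_y^2 P_t^G\phi_{\beta_{G_2}}(y)\bigr)=-\tfrac{1}{2}\sigma_{G_2}^2\,P_t^G\phi_{\beta_{G_2}}(y)\qquad\forall\,t>0.\]

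This is exactly equation~(\ref{C}) for $G_2$ with $\rho=\sigma_{G_2}$, whose bounded $C^{2,1}$ solutions (by the construction preceding Proposition~2.1) are the characteristic functions $\lambda\,\phi_{\beta_{G_2}}(\cdot+\theta)$. Thus for smooth $\lambda(t)>0,\theta(t)$ with $\lambda(0)=1,\theta(0)=0$,
\[P_t^G\phi_{\beta_{G_2}}(y)=\lambda(t)\,\phi_{\beta_{G_2}}(y+\theta(t)).\]
Plugging this ansatz into the $G$-heat equation $\partial_t u=G(\partial_y^2 u)$ and matching the coefficients of the linearly independent functions $\phi_{\beta_{G_2}}(\cdot+\theta)$ and $\phi_{\beta_{G_2}}'(\cdot+\theta)$ on each of the two pieces where $\phi_{\beta_{G_2}}''$ has constant sign forces $\theta'\equiv0$ together with the two competing relations
\[\frac{\lambda'(t)}{\lambda(t)}=-\frac{\underline{\sigma}_G^2(1+\beta_{G_2})^2}{8}=-\frac{\overline{\sigma}_G^2(1+\beta_{G_2})^2}{8\beta_{G_2}^2}.\]
Consistency requires $\beta_{G_2}^2=\overline{\sigma}_G^2/\underline{\sigma}_G^2=\beta_G^2$, so $\beta_{G_2}=\beta_G$; combining with $\beta_G^2=(\overline{\sigma}_1^2+\overline{\sigma}_2^2)/(\underline{\sigma}_1^2+\underline{\sigma}_2^2)$ and $\overline{\sigma}_2^2=\beta_{G_2}^2\underline{\sigma}_2^2$ then yields $\overline{\sigma}_1^2=\beta_{G_2}^2\underline{\sigma}_1^2$, i.e., $\beta_{G_1}=\beta_{G_2}$, as required.

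The main technical obstacle is the uniqueness statement used above---that every bounded $C^{2,1}$ solution of $G_2(\phi''(y))=-\tfrac{1}{2}\sigma_{G_2}^2\phi(y)$ has the explicit form $\lambda\,\phi_{\beta_{G_2}}(\cdot+\theta)$---together with the pointwise differentiation in $t$ of the operator identity, carried out within Peng's viscosity-solution framework while respecting the limited regularity of $\phi_{\beta_{G_2}}$ at the interfaces of its two pieces.
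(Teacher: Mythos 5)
Your sufficiency argument (the scaling $G_i=\underline{\sigma}_i^2 G_*$ combined with the semigroup property of $P^{G_*}$) is the standard one and is evidently what the paper has in mind when it declares this direction obvious. For the necessity, however, you take a genuinely different route. You upgrade the convolution identity to the operator identity $P_1^{G_1}\circ P_1^{G_2}=P_1^{G}$, test it on the eigenfunctions of $P^{G_2}$, differentiate in $t$ to extract the elliptic relation $G_2(\partial_y^2 u)=-\tfrac12\sigma_{G_2}^2 u$ for $u=P_t^G\phi_{\beta_{G_2}}$, classify the bounded solutions of that ODE, and read off $\beta_G=\beta_{G_2}$ by coefficient matching. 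The logic is sound and the computations check out (in particular $G=G_1+G_2$ pointwise, the parabolic rescaling in $(c,\theta)$, and the final consistency condition $\underline{\sigma}_G^2\beta_{G_2}^2=\overline{\sigma}_G^2$), but, as you acknowledge, the argument leans on three facts you do not prove: (i) interior $C^2$ regularity in $y$ and $C^1$ regularity in $t$ of $P_t^G\phi_{\beta_{G_2}}$, needed to differentiate the identity pointwise (true for nondegenerate convex $G$ by Evans--Krylov, but not free in a viscosity-solution framework); (ii) the classification of bounded $C^2$ solutions of $G_2(\phi'')=-\tfrac12\sigma_{G_2}^2\phi$ as exactly $\lambda\phi_{\beta_{G_2}}(\cdot+\theta)$ (true --- each sign arch is a cosine of the forced frequency and $C^1$ matching at the zeros pins the amplitude ratio at $\beta_{G_2}$ --- but it must be written out, including ruling out solutions with a double zero); and (iii) differentiability of $\lambda(t)$ and $\theta(t)$. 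The paper sidesteps all of this with a soft comparison argument: from $\beta_{G_1}<\beta_N<\beta_{G_2}$ and the pointwise gap $\phi_{\beta_N}-\phi_{\beta_{G_2}}\geq e(\beta_N,\beta_{G_2})>0$, monotonicity and constant-preservation of the nonlinear expectations give $N[\phi_{\beta_N}(\sqrt{t}\,\cdot)]\geq -\tfrac{2\beta_{G_2}}{1+\beta_{G_2}}e^{-\sigma_{G_2}^2 t/2}+e(\beta_N,\beta_{G_2})$, while the eigenfunction identity forces $N[\phi_{\beta_N}(\sqrt{t}\,\cdot)]=\tfrac{2}{1+\beta_N}e^{-\sigma_N^2 t/2}\to 0$, and letting $t\to\infty$ yields the contradiction with no regularity theory and no uniqueness lemma. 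Your approach buys more information (it identifies the exact evolution of the profile under $P_t^G$); the paper's buys economy. If you keep your route, items (i)--(iii) need to be supplied.
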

\begin {proof} The sufficiency is obvious, and we only prove the necessity.  Assume $N:=N_{G_1}*N_{G_2}$ is a $G$-normal distribution. Then $\underline{\sigma}_N^2=\underline{\sigma}_{G_1}^2+\underline{\sigma}_{G_2}^2$ and $\overline{\sigma}_N^2=\overline{\sigma}_{G_1}^2+\overline{\sigma}_{G_2}^2$. If $\beta_{G_1}\neq\beta_{G_2}$, we may assume that $\beta_{G_1}<\beta_{G_2}$, and the other case can be proved similarly. Then we have $\beta_{G_1}<\beta_N<\beta_{G_2}$. On one hand we have
\[N[\phi_{\beta_N}(\sqrt{t}\cdot)]=e^{-\frac{\sigma^2_N}{2}t}\phi_{\beta_N}(0)=\frac{2}{1+\beta_N}e^{-\frac{\sigma^2_N}{2}t}.\] On the other hand we have
\begin {eqnarray*}& &N_{G_2}[\phi_{\beta_N}(\sqrt{t}(x+\cdot))]\\
     &\geq&N_{G_2}[\phi_{\beta_{G_2}}(\sqrt{t}(x+\cdot))]+e(\beta_N,\beta_{G_2})\\
     &=&e^{-\frac{\sigma^2_{G_2}}{2}t}\phi_{\beta_{G_2}}(\sqrt{t}x)]+e(\beta_N,\beta_{G_2})\\
     &\geq&- \frac{2\beta_{G_2}}{1+\beta_{G_2}}e^{-\frac{\sigma^2_{G_2}}{2}t}+e(\beta_N,\beta_{G_2}).
\end {eqnarray*} Consequently, \[N[\phi_{\beta_N}(\sqrt{t}\cdot)]=N_{G_1}*N_{G_2}[\phi_{\beta_N}(\sqrt{t}\cdot)]\geq - \frac{2\beta_{G_2}}{1+\beta_{G_2}}e^{-\frac{\sigma^2_{G_2}}{2}t}+e(\beta_N,\beta_{G_2}).\]

Combining the above arguments we have
   \[\frac{2}{1+\beta_N}e^{-\frac{\sigma^2_N}{2}t}\geq - \frac{2\beta_{G_2}}{1+\beta_{G_2}}e^{-\frac{\sigma^2_{G_2}}{2}t}+e(\beta_N,\beta_{G_2}), \ \textmd{for any} \ t>0,\] which is a contradiction noting that $e(\beta_N,\beta_{G_2})>0$.
\end {proof}

\begin {theorem} For $G_1, G_2\in \mathcal{G}$,  $N_{G_1}*N_{G_2}=N_{G_2}*N_{G_1}$  if and only if $\beta_{G_1}=\beta_{G_2}$.
\end {theorem}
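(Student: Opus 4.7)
For the ``if'' direction, I invoke Theorem 3.1: if $\beta_{G_1}=\beta_{G_2}=\beta$, then both $N_{G_1}*N_{G_2}$ and $N_{G_2}*N_{G_1}$ are $G$-normal, and by the same variance identification used in the proof of Theorem 3.1 (which is order-independent) both have parameters $\underline{\sigma}^2=\underline{\sigma}_{G_1}^2+\underline{\sigma}_{G_2}^2$ and $\overline{\sigma}^2=\overline{\sigma}_{G_1}^2+\overline{\sigma}_{G_2}^2$, so the two distributions coincide.

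The substantive part is the converse. The plan is to mimic the strategy of Theorem 3.1: choose a test function $\varphi_t=\phi_\alpha(\sqrt{t}\cdot)$, derive an upper bound for one ordering and a lower bound for the other, then collide the two bounds as $t\to\infty$. Since the equation is symmetric in $(G_1,G_2)$, I may assume $\alpha:=\beta_{G_1}<\beta_{G_2}=:\beta$. On the side where the inner operator is $N_{G_1}$, the function $\phi_\alpha$ is a characteristic function of $N_{G_1}$, so the Corollary of Section~2 yields the closed form
\[N_{G_2}*N_{G_1}[\varphi_t]=e^{-\sigma_{G_1}^2 t/2}\,N_{G_2}[\phi_\alpha(\sqrt{t}\cdot)]\leq \tfrac{2}{1+\alpha}\,e^{-\sigma_{G_1}^2 t/2},\]
using the trivial upper bound $\phi_\alpha\leq\tfrac{2}{1+\alpha}$.

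On the other side, the inner operator $N_{G_2}$ is applied to $\phi_\alpha(\sqrt{t}(x+\cdot))$ and no closed form is available because $\beta\neq\alpha$. The key step is to invoke the gap inequality $\phi_\alpha\geq\phi_\beta+e(\alpha,\beta)$ stated at the start of Section~3, which gives
\[N_{G_2}[\phi_\alpha(\sqrt{t}(x+\cdot))]\geq e^{-\sigma_{G_2}^2 t/2}\phi_\beta(\sqrt{t}x)+e(\alpha,\beta),\]
where the Corollary applies because $\phi_\beta$ \emph{is} characteristic for $N_{G_2}$. Applying $N_{G_1}$ to this lower bound and using $\phi_\beta\geq -\tfrac{2\beta}{1+\beta}$ yields
\[N_{G_1}*N_{G_2}[\varphi_t]\geq -\tfrac{2\beta}{1+\beta}e^{-\sigma_{G_2}^2 t/2}+e(\alpha,\beta).\]
Imposing the assumed equality $N_{G_1}*N_{G_2}[\varphi_t]=N_{G_2}*N_{G_1}[\varphi_t]$ and letting $t\to\infty$, both exponentials vanish while $e(\alpha,\beta)>0$ persists, giving the contradiction $0\geq e(\alpha,\beta)$.

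The main obstacle is precisely the lack of a closed-form evaluation of $N_{G_2}[\phi_\alpha(\sqrt{t}\cdot)]$ when $\alpha\neq\beta_{G_2}$; the gap inequality $\phi_\alpha\geq\phi_\beta+e(\alpha,\beta)$ is the essential tool that converts this intractable term into a tractable one, and the positive constant $e(\alpha,\beta)$ it produces is exactly what survives the simultaneous exponential decay as $t\to\infty$ and forces the contradiction.
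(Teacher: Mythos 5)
Your proposal is correct and follows essentially the same route as the paper: the same test function $\phi_{\beta_{G_1}}(\sqrt{t}\cdot)$, the same closed-form evaluation on the side where the inner operator matches the characteristic function, the same use of the gap inequality $\phi_\alpha\geq\phi_\beta+e(\alpha,\beta)$ on the other side, and the same collision of the two bounds for large $t$. The only addition is your explicit treatment of the sufficiency direction, which the paper omits.
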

\begin {proof} We shall only prove the necessity. Assume that $\beta_{G_1}<\beta_{G_2}$. Then \[N_{G_1}[\phi_{\beta_{G_1}}(\sqrt{t}x+\sqrt{t}\cdot)]=e^{-\frac{\sigma^2_{G_1}}{2}t}\phi_{\beta_{G_1}}(\sqrt{t}x)\leq \frac{2}{1+\beta_{G_1}} e^{-\frac{\sigma^2_{G_1}}{2}t}.\] So \[N_{G_2}*N_{G_1}[\phi_{\beta_{G_1}}(\sqrt{t}\cdot)]\leq \frac{2}{1+\beta_{G_1}} e^{-\frac{\sigma^2_{G_1}}{2}t}.\] On the other hand
\begin {eqnarray*}& &N_{G_2}[\phi_{\beta_{G_1}}(\sqrt{t}x+\sqrt{t}\cdot)]\\
     &\geq&N_{G_2}[\phi_{\beta_{G_2}}(\sqrt{t}x+\sqrt{t}\cdot)]+e(\beta_{G_1},\beta_{G_2})\\
     &=&e^{-\frac{\sigma^2_{G_2}}{2}t}\phi_{\beta_{G_2}}(\sqrt{t}x)+e(\beta_{G_1},\beta_{G_2})\\
     &\geq&- \frac{2\beta_{G_2}}{1+\beta_{G_2}}e^{-\frac{\sigma^2_{G_2}}{2}t}+e(\beta_{G_1},\beta_{G_2}),
\end {eqnarray*}which implies that
\[N_{G_1}*N_{G_2}[\phi_{\beta_{G_1}}(\sqrt{t}\cdot)]\geq - \frac{2\beta_{G_2}}{1+\beta_{G_2}}e^{-\frac{\sigma^2_{G_2}}{2}t}+e(\beta_{G_1},\beta_{G_2}).\] Noting that $e(\beta_{G_1},\beta_{G_2})>0$, we have \[N_{G_1}*N_{G_2}[\phi_{\beta_{G_1}}(\sqrt{t}\cdot)]>N_{G_2}*N_{G_1}[\phi_{\beta_{G_1}}(\sqrt{t}\cdot)] \ \textmd{for $t$ large enough.}\]
\end {proof}


\renewcommand{\refname}{\large References}{\normalsize \ }

\end{document}